\newcommand{\be}{\begin{equation}}
\newcommand{\ee}{\end{equation}}
\newcommand{\ba}{\begin{eqnarray}}
\newcommand{\ea}{\end{eqnarray}}
\newcommand{\baa}{\begin{eqnarray*}}
\newcommand{\eaa}{\end{eqnarray*}}
\newcommand{\bb}{}
\newcommand{\bi}[1]{\bibitem{#1}}
\newcommand{\lab}[1]{\label{#1}}
\newcommand{\re}[1]{(\ref{#1})}
\newcounter{my}
\newcommand{\he}%
   {\stepcounter{equation}\setcounter{my}%
   {\value{equation}}\setcounter{equation}0%
   }%
\newcommand{\she}%
   {\setcounter{equation}{\value{my}}%
    }%
\newcommand\ve{\varepsilon}
\newtheorem{theorem}{Theorem}[section]
\newtheorem{pr}[theorem]{Proposition}
\newtheorem{lemma}[theorem]{Lemma}
\theoremstyle{definition}
\numberwithin{equation}{section}
\begin{document}


\title[Isospectral deformations]{Persymmetric Jacobi matrices, isospectral deformations and orthogonal polynomials}


\author{Vincent Genest}
\author{Satoshi Tsujimoto}
\author{Luc Vinet}
\author{Alexei Zhedanov}

\address{Department of Mathematics, Massachusetts Institute of Technology, 77 Massachusetts Ave., Cambridge, MA 02139, USA}

\address{Department of Applied Mathematics and Physics, Graduate School of Informatics, Kyoto University, Yoshida-Honmachi, Sakyo-ku, Kyoto, Japan 606--8501}

\address{Centre de recherches math\'ematiques
Universite de Montr\'eal, P.O. Box 6128, Centre-ville Station,
Montr\'eal (Qu\'ebec), H3C 3J7}

\address{Institute for Physics and Technology\\
R.Luxemburg str. 72 \\
83114 Donetsk, Ukraine \\}

\begin{abstract}
Persymmetric Jacobi matrices are invariant under reflection with respect to the anti-diagonal. The associated orthogonal polynomials have distinctive properties that are discussed. They are found in particular to be also orthogonal on the restrictions either to the odd or to the even points of the complete orthogonality lattice. This is exploited to design very efficient inverse problem algorithms for the reconstruction of persymmetric Jacobi matrices from spectral points. Isospectral deformations of such matrices are also considered. Expressions for the associated polynomials and their weights are obtained in terms of the undeformed entities.
\end{abstract}

\keywords{persymmetric matrices, inverse spectral problems, orthogonal polynomials, isospectral deformations}


\maketitle

\section{Introduction}

\setcounter{equation}{0}

Jacobi matrices that are invariant under reflection with respect to the anti-diagonal are said to be persymmetric or mirror-symmetric. 
These matrices are known to possess tractable inverse spectral problems\cite{Atkinson,BG,BS} and  arise in a number of situations especially in engineering (see for instance \cite{Gladwell}). Interestingly, they have recently appeared in the analysis of perfect state transfer in quantum spin chains\cite{Albanese,Kay,VZ_PST}. In the latter context the main question is: what should the design of the chain be if it is to act as a quantum wire and transport quantum states from one 
location to another with probability one. The answer exploits the connection between orthogonal polynomials and Jacobi matrices as well as the features that mirror-symmetry brings. The present paper provides a systematic examination of the properties that the reflection invariance of persymmetric matrices induces on the associated orthogonal polynomials.

Isospectral deformations of persymmetric Jacobi matrices  have also found applications in the transport of quantum states along spin chains for instance.
Such Jacobi matrices have been seen to lead to spin chain models that exhibit the phenomenon of fractional revival \cite{Dai,GVZ_1506,GVZ_1507,Kay}.
When this occurs , small clones of the initial state are reproduced at the beginning and the end of the wire periodically.
Fractional revival also allows to transport quantum information and 
may serve as a mechanism to generate maximally entangled states.
This motivates the study of the properties of the orthogonal polynomials associated to isospectral deformations of persymmetric Jacobi matrices,
a task that is also carried out here.

The outline of the paper is thus as follows. In section 2, we review relevant aspects of the theory of Jacobi matrices and orthogonal polynomials and introduce persymmetric Jacobi matrices and their general features.
We obtain in Section 3 the essential properties of the persymmetric orthogonal polynomials. We show in particular  that these polynomials are orthogonal on the sublattices made out either of the odd or of the even points of the full orthogonality grid. 
This is exploited to present two new efficient algorithms for the reconstruction of  persymmetric Jacobi matrices from their spectra. 
Isospectral deformations are considered in Section 4 where the associated polynomials and weights are given in terms of those of the undeformed persymmetric Jacobi matrix.

\section{Persymmetric Jacobi matrices}
\setcounter{equation}{0}
Consider the tri-diagonal or Jacobi Hermitian matrix of size $N+1 \times N+1$
\[
J =
 \begin{pmatrix}
  b_{0} & a_1 & 0 &    \\
  a_{1} & b_{1} & a_2 & 0  \\
   0  &  a_2 & b_2 & a_3    \\
   &   &  \ddots &    \ddots  \\
      & & \dots &a_{N-1} & b_{N-1} & a_N \\   
            & & \dots &0 & a_N & b_N
         
\end{pmatrix}.
\] 
In what follows we will assume that all entries $a_i, b_i, \: i=0, 1, \dots$ are real. 

Let $e_i, \: i=0,1,\dots, N$ be the canonical basis in the $N+1$-dimensional linear space such 
\be
J e_n = a_{n+1} e_{n+1} + b_n e_n + a_n e_{n-1}, \quad n=0,1,\dots N \lab{J_e}. \ee
In order for relations (\ref{J_e}) to be valid for $n=0$ and $n=N$ we put additionally $a_0=a_{N+1}=0$. Of course, this condition is not a restriction but rather a convention.  

Introduce also the eigenvectors $X_s$ of the matrix $J$
\be
J X_s = x_s X_s, \quad s=0,1,\dots N \lab{JX} \ee
with eigenvalues $x_s$. It is well known that if $a_i \ne 0$ 
then all the eigenvalues of the matrix $J$ are real and nondegenerate \cite{Atkinson}, i.e. 
\be
x_s \ne x_{s'}, \quad \mbox{if} \quad s \ne s' \lab{ndeg_x}. \ee 
One can expand the vectors $X_s$ in terms of the basis vectors $e_n$ \cite{Atkinson}:
\be
X_s = \sum_{n=0}^N \sqrt{w_s} \chi_n(x_s) e_n, \lab{X_e} \ee 
where $\chi_n(x)$ are orthonormal polynomials defined by the initial conditions $\chi_{-1}(x)=0, \: \chi_0(x)=1$ and the 3-term recurrence relations 
\be
a_{n+1} \chi_n(x) + b_n \chi_n(x) + a_n \chi_{n-1}(x) = x \chi_n(x). \lab{3-term_chi} \ee
The reciprocal expansion of the vectors $e_n$ in terms of the eigenvectors is \cite{Atkinson}:
\be
e_n =\sum_{s=0}^N \sqrt{w_s} \chi_n(x_s) X_s. \lab{e_n_chi} \ee  
In view of the orthonormality of the two bases,
the polynomials $\chi_n(x)$ are orthogonal with respect to the weight function $w_s$:
\be 
\sum_{s=0}^N w_s \chi_n(x_s) \chi_m(x_s) = \delta_{nm}. \lab{osrt_chi} \ee

Sometimes it is convenient to introduce the so-called monic orthogonal polynomials 
\be
P_n(x) = a_1 a_2 \dots a_n \chi_{n}(x). \lab{P_monic} \ee
These polynomials have the expansion
\be
P_n(x) = x^n + O(x^{n-1}) \lab{P_expans} \ee
and satisfy the 3-term recurrence relation
\be
P_{n+1}(x) + b_n P_n(x) + u_n P_{n-1}(x) = x P_n(x), \lab{3_term_P} \ee
where $u_n = a_n^2>0$.
The recurrence relation \re{3_term_P} corresponds to the Jacobi matrix
\be
K =
 \begin{pmatrix}
  b_{0} & 1 & 0 &    \\
  u_{1} & b_{1} & 1 & 0  \\
   0  &  u_2 & b_2 & 1    \\
   &   &  \ddots &    \ddots  \\
      & & \dots &u_{N-1} & b_{N-1} & 1 \\   
            & & \dots &0 & u_N & b_N
         
\end{pmatrix}, 
\lab{matr_K} \ee
which is related to the Jacobi matrix $J$ by a similarity transformation
\be
K = S J S^{-1} \lab{K_J_S} \ee
with some diagonal matrix $S$. Sometimes the matrix $K$ is preferable because one can change the signs of the elements $a_n$ of the matrix $J$ without changing the spectral data $\{x_s, \: s=0,1,\dots, N\}$.

Note that the eigenvalues $x_s$ are the roots of the characteristic polynomial $P_{N+1}(x)$ \cite{Atkinson}:
\be
P_{N+1}(x) = (x-x_0) (x-x_1) \dots (x-x_N), \lab{P_N+1_roots} \ee
where the polynomial $P_{N+1}(x)$ can be determined from the recurrence relation \re{3_term_P} for $n=N$.

In what follows we will assume that the eigenvalues are 
ordered as follows from the smallest to the largest:
\be
x_0<x_1<x_2<\dots <x_N \lab{order_x}. \ee

The weights $w_s$ are given by the the formula \cite{Chi}
\be
w_s = \frac{h_N}{P_{N}(x_s) P_{N+1}'(x_s)}, \quad s=0,1,\dots,N,  \lab{w_s_expl} \ee
where 
\be
h_n = u_1 u_2 \dots u_n \lab{h_def} \ee
are normalization constants. 

For Hermitian Jacobi matrices with nonzero entries $a_n$, all weights are nonnegative, $w_s \ge 0$, and moreover they are normalized \cite{Chi}
\be
\sum_{s=0}^N w_s =1. \lab{norm_w} \ee
Note that for monic orthogonal polynomials the orthogonality relation looks as 
\be
\sum_{s=0}^N w_s P_n(x_s) P_m(x_s) = h_n \: \delta_{nm}.  \lab{ort_h} \ee 

There is an important interlacing property of the zeros of the polynomials of an orthogonal set
 \cite{Atkinson}: all zeros of the polynomial $P_n(x), \: n=1,2,\dots, N$ are distinct and moreover any zero of the polynomial $P_n(x)$ lies between two neighboring zeros of the polynomial $P_{n+1}(x)$. In particular, this means that the sequence $P_N(x_s), \: s=0,1,2,\dots$ has alternating signs:
\be
P_N(x_s) = (-1)^{N+s} W_s, \quad s=0,1,\dots, N, \lab{alt_P_N}   \ee
where $W_s$ is a strictly positive sequence $W_s>0$.

Let $R$ be the ``reflection'' matrix
\[
R=\begin{pmatrix}
  0 & 0 & \dots & 0 & 1    \\
  0 & 0 & \dots  & 1 & 0  \\
    \dots  & \dots & \dots & \dots & \dots      \\
   1 & 0 &  \dots & 0 &0  \\
\end{pmatrix}.
\]
Clearly, $R$ is an involution, i.e.
\be
R^2 = I, \lab{R^2} \ee
where $I$ is the identity matrix. Hence the eigenvalues of the matrix $R$ are either 1 or -1. 
The action of $R$ on the basis vectors $e_n$ is obviously:
\be
R e_n = e_{N-n} \lab{R_e_n}. \ee
The Hermitian matrix $J$ is called persymmetric (or mirror-symmetric) \cite{Gladwell} if it commutes with the reflection matrix
\be
JR = RJ \lab{JR}. \ee
Condition \re{JR} means that the entries of the persymmetric matrix satisfy the conditions
\be
a_{N+1-i}=a_i, \quad b_{N-i} = b_i, \quad i=0,1,\dots N. \lab{sym_ab} \ee
Hence a persymmetric matrix $J$ takes the form
\[
J =
 \begin{pmatrix}
  b_{0} & a_1 & 0 &    \\
  a_{1} & b_{1} & a_2 & 0  \\
   0  &  a_2 & b_2 & a_3    \\
   &   &  \ddots &    \ddots  \\
      & & \dots &a_2 & b_1 & a_1 \\   
            & & \dots &0 & a_1 & b_0
\end{pmatrix}.
\] 
For the Jacobi matrix $K$ \re{matr_K} the persymmetric property means that
\be
KR = R K^{T}, \lab{per_K} \ee  
where $K^{T}$ is the transposed matrix.

From the commutativity of the matrix $J$ with the matrix $R$ and from the nondegeneracy of the eigenvalues of matrix $J$, it follows that any eigenvector $X_s$ should be also an eigenvector of the reflection operator $R$:
\be
R X_s = \ve_s X_s, \lab{RX} \ee
where $\ve_s = \pm 1$. We will specify the values $\ve_s$ in the next section.

\section{Basic properties of persymmetric orthogonal polynomials}
\setcounter{equation}{0}
In this section we present the basic properties of the orthogonal polynomials $\chi(x)$ (or $P_n(x)$) 
that are associated to the persymmetric
Jacobi matrices $J$. Some of these properties are well known, some are new.

First of all, consider the property \re{RX}. Expanding the eigenvector $X_s$ in terms of the basis $e_n$ and using \re{X_e} we arrive at the relation
\be
\chi_{N-n}(x_s) = \ve_s \chi_n(x_s). \lab{chi_chi_s} \ee 
Relation \re{chi_chi_s} should be valid for all $n=0,1,\dots,N$. Choose $n=N$. Then $\chi_0=1$ and relation \re{chi_chi_s} becomes
\be
\chi_N(x_s) = \ve_s, \lab{chi_ve} \ee
where $\ve_s = \pm 1$. But by property \re{alt_P_N} we have that the only choice for $\ve_s$ is
\be
\ve_s = (-1)^{N+s}. \lab{ve_s} \ee
We thus have the following.
\begin{lemma}\label{lem_31}
The orthonormal polynomials corresponding to the persymmetric matrix $J$ satisfy the relation
\be
\chi_{N-n}(x_s) = (-1)^{N+s} \chi_n(x_s), \quad n=0,1,\dots, N. \lab{lem_zer_chi} \ee  
\end{lemma}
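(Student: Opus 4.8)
The plan is to read the symmetry of $\chi_n(x_s)$ directly off the fact, already recorded in \re{RX}, that each eigenvector $X_s$ is simultaneously an eigenvector of the reflection $R$ with eigenvalue $\ve_s=\pm1$. The argument then consists of a single coefficient comparison followed by a determination of the sign $\ve_s$.

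First I would substitute the expansion \re{X_e} of $X_s$ in the canonical basis into $R X_s=\ve_s X_s$. Acting with $R$ term by term and using $R e_n=e_{N-n}$ from \re{R_e_n}, the left-hand side becomes $\sum_{n=0}^N \sqrt{w_s}\,\chi_n(x_s)\,e_{N-n}$; after the reindexing $n\mapsto N-n$ this is $\sum_{n=0}^N \sqrt{w_s}\,\chi_{N-n}(x_s)\,e_n$. Matching the coefficient of each basis vector $e_n$ against the corresponding coefficient $\ve_s\sqrt{w_s}\,\chi_n(x_s)$ on the right-hand side, and cancelling the common nonzero factor $\sqrt{w_s}$, yields $\chi_{N-n}(x_s)=\ve_s\,\chi_n(x_s)$, which is exactly \re{chi_chi_s}.

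It then remains only to identify $\ve_s$ explicitly. Specializing this relation to $n=N$ and invoking the normalization $\chi_0(x)\equiv1$ gives $\chi_N(x_s)=\ve_s$, so that $\chi_N(x_s)\in\{-1,+1\}$; this is \re{chi_ve}. The sign is then pinned down by the interlacing property recorded in \re{alt_P_N}: since $P_N(x_s)=(-1)^{N+s}W_s$ with $W_s>0$, the value of $\chi_N$ at the eigenvalues ordered as in \re{order_x} alternates in sign with the prescribed parity, forcing $\ve_s=(-1)^{N+s}$ as in \re{ve_s}. Substituting back into \re{chi_chi_s} produces the claimed identity.

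I expect the only genuinely delicate point to be this final sign determination. That $X_s$ is an $R$-eigenvector at all rests on the nondegeneracy \re{ndeg_x} together with $JR=RJ$ from \re{JR}, and that $\ve_s=\pm1$ is immediate from $R^2=I$ in \re{R^2}; but deciding \emph{which} sign occurs at each $s$ cannot be seen from $R^2=I$ alone and genuinely requires the spectral ordering \re{order_x} and the alternation of $\chi_N$ at consecutive eigenvalues furnished by the interlacing of zeros. Everything else is bookkeeping.
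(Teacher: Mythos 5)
Your proof is correct and follows essentially the same route as the paper: expand $X_s$ via \re{X_e}, use $Re_n=e_{N-n}$ to get $\chi_{N-n}(x_s)=\ve_s\chi_n(x_s)$, specialize to $n=N$ to obtain $\chi_N(x_s)=\ve_s$, and fix the sign from the alternation \re{alt_P_N}. No substantive differences to report.
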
 
For the monic polynomials we have a similar relation:
\be
P_{N-n}(x_s) = (-1)^{N+s} \nu_n P_n(x_s), \lab{lem_zer_P} \ee 
where
\be
\nu_n = \sqrt{h_{N-n}/h_n}. \lab{nu_hh} \ee
Taking into account that for persymmetric matrix  $u_{N+1-n}=u_n$, we have  $h_{N-n} = h_N/h_n$ and hence 
\be
\nu_n = \left\{
\begin{array}{cl}
\frac{h_{N/2}}{h_n}, &  N \quad \mbox{even},\\
\frac{\sqrt{u_{(N+1)/2}} \: h_{(N-1)/2}}{h_n}, & N \quad \mbox{odd}.
\end{array}
\right . 
\lab{nu_n} \ee
This lemma is well known . For other proofs see, e.g. \cite{BG}, \cite{Bor}.

Using Lemma \ref{lem_31}, we obtain from formula  \re{w_s_expl} the following expression for the weights
\be
w_s = (-1)^{N+s} \frac{h_N}{P_{N+1}'(x_s)}, \quad s=0,1,\dots,N. \lab{w_s_per} \ee 
This formula shows that orthogonality is defined uniquely from the spectrum $x_0,x_1,\dots, x_N$.

We thus have
\begin{lemma}\label{lem_32}
The spectral points $x_0, x_1, \dots, x_N$ determine the coefficients $b_0, b_1, \dots$ and $u_1, u_2, \dots $ of the  persymmetric Jacobi matrix $K$ uniquely. 
\end{lemma}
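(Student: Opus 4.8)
The plan is to reduce the statement to the classical fact that a finite Jacobi matrix is uniquely recovered from its spectral measure, and then to show that for a persymmetric matrix this measure is already fixed by the eigenvalues alone.

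First I would recall the standard inverse correspondence. The orthogonality relation \re{ort_h} exhibits the $P_n(x)$ as the monic orthogonal polynomials for the discrete measure $d\mu = \sum_{s=0}^N w_s\, \delta(x - x_s)$, a positive measure supported on the $N+1$ distinct points $x_s$. Applying Gram--Schmidt to $1, x, \dots, x^N$ with respect to $d\mu$ (equivalently, reading off the coefficients from the Hankel determinants in the moments $c_k = \sum_s w_s x_s^k$, $k=0,1,\dots,2N$) produces the $P_n$, and comparison with \re{3_term_P} then yields the coefficients $b_n$ and $u_n$ uniquely. Thus $K$ is determined once the pairs $(x_s, w_s)$ are known, and it remains only to show that the $w_s$ are themselves fixed by the spectrum.

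Next I would establish that dependence. The support points of $d\mu$ are the eigenvalues $x_s$. For the weights I would invoke the persymmetric formula \re{w_s_per}, in which $P_{N+1}(x) = \prod_{s'=0}^N (x - x_{s'})$ by \re{P_N+1_roots}; hence $P_{N+1}'(x_s) = \prod_{s' \neq s}(x_s - x_{s'})$ is a quantity built solely from the spectrum. The only factor in \re{w_s_per} not manifestly determined by the $x_s$ is the overall constant $h_N = u_1 \cdots u_N$. I would remove this ambiguity using the normalization \re{norm_w}: summing \re{w_s_per} over $s$ and imposing $\sum_s w_s = 1$ gives
\be
h_N = \left( \sum_{s=0}^N \frac{(-1)^{N+s}}{P_{N+1}'(x_s)} \right)^{-1},
\ee
which is well defined since the $w_s$ are strictly positive, so the sum cannot vanish. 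Substituting back, each $w_s$ is expressed entirely in terms of $x_0, \dots, x_N$.

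Combining the two steps, the spectrum determines $d\mu$ and hence, through the reconstruction of the monic orthogonal polynomials, the coefficients $b_n$ and $u_n$ of $K$. The only genuinely nontrivial ingredient is the first step --- the classical existence and uniqueness of a Jacobi matrix with a prescribed finite spectral measure --- which I would simply cite; everything specific to the persymmetric case is the elementary observation that \re{w_s_per} together with \re{norm_w} eliminates the unknown normalization $h_N$. A small point to verify is that the expression for $h_N$ is consistent and positive, which follows from the sign pattern already recorded in \re{alt_P_N} and reflected in \re{w_s_per}.
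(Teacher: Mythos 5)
Your proposal is correct and follows essentially the same route as the paper: the paper's (very terse) justification is precisely that formula \re{w_s_per} shows the orthogonality measure is fixed by the spectrum alone, the normalization $h_N$ being pinned down by \re{norm_w}, after which the standard measure-to-Jacobi-matrix correspondence gives uniqueness of the $b_n$ and $u_n$. You have merely spelled out the two ingredients the paper leaves implicit, including the useful sanity check on the sign of $h_N$ via \re{alt_P_N}.
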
 
This lemma is well known (see, e.g. \cite{Gladwell}).

Using lemmas \ref{lem_31} and \ref{lem_32}, one can propose simple algorithms for the inverse spectral problem, i.e. 
for the reconstruction of
the entries $b_0, b_1, \dots$ and $u_1, u_2, \dots $ of the persymmetric matrix $K$ if the spectrum $x_0, x_1, \dots, x_N$ is given. These algorithms are described in \cite{BG}, \cite{Gladwell}, \cite{VZ_PST}.

The first algorithm exploits the formula \re{w_s_per} for the weights $w_s$. Indeed, starting from the spectrum $x_s$ one can compute the non-normalized weights 
\be
w_s = \frac{1}{P_{N+1}'(x_s)} \lab{n_norm_w} \ee
and then reconstruct the orthogonal polynomials $P_1(x), P_2(x), \dots$ and the corresponding recurrence coefficients $b_n, u_n$using the standard Gram-Schmidt orthogonalization procedure (see \cite{BG}, \cite{Gladwell} for details).

Another algorithm exploits the property $\chi_N(x_s) = (-1)^{N+s}$ of the orthonormal polynomials corresponding to a persymmetric Jacobi matrix. The first step of this algorithm 
consists in the construction of the
monic polynomial $P_N(x)$ using the Lagrange interpolation formula. Indeed, the above formula allows to construct the polynomial $\chi_N(x)$ uniquely from its values $(-1)^{N+s}$ at the prescribed points $x_s$. This immediately gives the monic polynomial $P_N(x)$. We thus know explicitly two polynomials: $P_N(x)$  and $P_{N+1}(x) = (x-x_0) (x-x_1) \dots (x-x_N)$. Using the standard Euclidean algorithm we can obtain - step-by-step - all the other polynomials $P_{N-1}(x), P_{N-2}(x), \dots P_1(x)$ together with the corresponding recurrence coefficients $u_n, b_n$. This algorithm was applied in \cite{VZ_PST} in order to characterize spin chains which admit perfect state transfer.

We now present a new property of the persymmetric polynomials. We first introduce the following notations. Let $\Omega_0(x)$ and $\Omega_1(x)$ be the characteristic polynomials of the even and odd spectral points $x_s$. 
More precisely, let
\be
\Omega_0(x) = \left\{  { (x-x_0)(x-x_2) \dots (x-x_{N-1}), \quad N \quad \mbox{odd}, \atop (x-x_0)(x-x_2) \dots (x-x_{N}), \quad N \quad \mbox{even}, } \right . \lab{OM0} \ee 
and 
\be
\Omega_1(x) = \left\{  { (x-x_1)(x-x_3) \dots (x-x_{N}), \quad N \quad \mbox{odd}, \atop (x-x_1)(x-x_3) \dots (x-x_{N-1}), \quad N \quad \mbox{even}. } \right . \lab{OM1} \ee 
Note that for $N$ odd both polynomials $\Omega_0(x)$ and $\Omega_1(x)$ have degree $(N+1)/2$. For $N$ even, the polynomial $\Omega_0(x)$ has degree $N/2+1$ while polynomial $\Omega_1(x)$ has degree $N/2$. Recall that all the eigenvalues are assumed 
to be ordered in an increasing fashion
$x_0<x_1<x_2<\dots<x_N$ and that all the eigenvalues are distinct.

Introduce also the notation
\be
\sigma_0 = x_0 + x_2  \dots, \quad \sigma_1 = x_1 + x_3 + \dots, \lab{s_01} \ee 
where the summation is assumed over all even or all odd spectral points.

We have the following result.
\begin{lemma}
Let $N=2L+1$ be odd. The monic persymmetric orthogonal polynomials $P_{L+1}(x)$ and $P_L(x)$ 
are given by
\be
P_{L+1}(x) = \frac{\Omega_0(x) + \Omega_1(x) }{2}, \quad P_{L}(x) = \frac{\Omega_0(x) - \Omega_1(x) }{\sigma_1-\sigma_0} \lab{P_L_L+1_odd} \ee
and the recurrence coefficient $u_{L+1}$ has the following expression
\be
u_{L+1}=\frac{(\sigma_1-\sigma_0)^2}{4}. \lab{u_L+1_odd} \ee
Let $N=2L$ be even. In this case the monic polynomial $P_{L+1}(x)$ and $P_L(x)$ 
read
\be
P_{L+1}(x) = \frac{\Omega_0(x) +(x+\sigma_1-\sigma_0)\Omega_1(x)}{2}, \quad     P_L(x) =\Omega_1(x) \lab{P_L_even} \ee
and the recurrence coefficient $b_L$ is
\be
b_L=\sigma_0-\sigma_1. \lab{b_L_even} \ee
\end{lemma}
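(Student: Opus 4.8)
The plan is to read everything off from the reflection identity for the monic polynomials, equation \re{lem_zer_P}, evaluated at the middle index, and then to repackage the result using the recurrence \re{3_term_P}. Recall that \re{lem_zer_P} reads $P_{N-n}(x_s) = (-1)^{N+s}\nu_n P_n(x_s)$ with $\nu_n = \sqrt{h_{N-n}/h_n}$ by \re{nu_hh}, that $h_{N-n}=h_N/h_n$, and that persymmetry gives $u_{N+1-n}=u_n$. Throughout I write $\langle f,g\rangle=\sum_{s} w_s f(x_s)g(x_s)$ for the discrete inner product, so that \re{ort_h} says $\langle P_n,P_m\rangle = h_n\delta_{nm}$, and I use $P_{N+1}=\Omega_0\Omega_1$ from \re{P_N+1_roots}.

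For $N=2L+1$ I would take $n=L$, so $N-n=L+1$ and $\nu_L=\sqrt{h_{L+1}/h_L}=\sqrt{u_{L+1}}$. Since $(-1)^{N+s}=(-1)^{s+1}$, identity \re{lem_zer_P} becomes $P_{L+1}(x_s)=(-1)^{s+1}\sqrt{u_{L+1}}\,P_L(x_s)$. Hence the monic polynomial $P_{L+1}(x)+\sqrt{u_{L+1}}\,P_L(x)$, of degree $L+1$, vanishes at the $L+1$ even points $x_0,x_2,\dots,x_{2L}$, while $P_{L+1}(x)-\sqrt{u_{L+1}}\,P_L(x)$ vanishes at the $L+1$ odd points; being monic they must equal $\Omega_0$ and $\Omega_1$ respectively. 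Adding and subtracting gives $P_{L+1}=(\Omega_0+\Omega_1)/2$ and $\sqrt{u_{L+1}}\,P_L=(\Omega_0-\Omega_1)/2$. Comparing the coefficients of $x^L$ in the last relation (the leading $x^{L+1}$ cancel and the subleading coefficients of $\Omega_0,\Omega_1$ are $-\sigma_0,-\sigma_1$) yields $\sqrt{u_{L+1}}=(\sigma_1-\sigma_0)/2$, which establishes \re{u_L+1_odd} and the stated expression for $P_L$ at once; positivity of $u_{L+1}$ forces $\sigma_1>\sigma_0$, so there is no sign ambiguity.

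For $N=2L$ the same identity at $n=L$ is self-dual: now $N-n=L$ and $\nu_L=1$, so $P_L(x_s)=(-1)^sP_L(x_s)$, forcing $P_L(x_s)=0$ at the $L$ odd points and hence $P_L=\Omega_1$, which is the second equation of \re{P_L_even}. Next I would take $n=L-1$, giving $N-n=L+1$ and $\nu_{L-1}=\sqrt{h_{L+1}/h_{L-1}}=\sqrt{u_Lu_{L+1}}$; persymmetry gives $u_{L+1}=u_L$, so $\nu_{L-1}=u_L$, and at the even points, where $(-1)^{N+s}=1$, we get $P_{L+1}(x_s)=u_LP_{L-1}(x_s)$. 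Thus $P_{L+1}-u_LP_{L-1}$ is monic of degree $L+1$ and vanishes at the $L+1$ even points, so $P_{L+1}-u_LP_{L-1}=\Omega_0$. Eliminating $u_LP_{L-1}$ with the recurrence $P_{L+1}=(x-b_L)P_L-u_LP_{L-1}$ gives $2P_{L+1}=\Omega_0+(x-b_L)\Omega_1$, which is the first equation of \re{P_L_even} once $b_L$ is identified.

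The one genuinely separate computation, and the step I expect to be the main obstacle, is showing $b_L=\sigma_0-\sigma_1$. I would extract it from orthogonality: since $\Omega_1=P_L$, relation \re{ort_h} gives $\langle P_{L+1},\Omega_1\rangle=0$, and applying $\langle\,\cdot\,,\Omega_1\rangle$ to $2P_{L+1}=\Omega_0+(x-b_L)\Omega_1$ yields $b_L=\langle x\Omega_1,\Omega_1\rangle/\langle\Omega_1,\Omega_1\rangle$, where $\langle\Omega_0,\Omega_1\rangle=0$ because $\Omega_0\Omega_1$ vanishes at every node. Inserting the persymmetric weight \re{w_s_per} with $P_{N+1}=\Omega_0\Omega_1$ (so that $P_{N+1}'(x_s)=\Omega_0'(x_s)\Omega_1(x_s)$ at even $s$) converts both inner products, up to a common normalization constant that cancels in the ratio, into sums $\sum_{s\ \mathrm{even}}f(x_s)/\Omega_0'(x_s)$ over the roots of $\Omega_0$. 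The elementary root-sum identities $\sum_r r^k/\Omega_0'(r)=0$ for $k<\deg\Omega_0-1$, $=1$ for $k=\deg\Omega_0-1$, and $=\sigma_0$ for $k=\deg\Omega_0$ then collapse the denominator (coming from $\Omega_1=x^L+\cdots$) to the constant times $1$ and the numerator (coming from $x\Omega_1=x^{L+1}-\sigma_1x^L+\cdots$) to the constant times $\sigma_0-\sigma_1$, giving $b_L=\sigma_0-\sigma_1$ and hence \re{b_L_even}. Substituting $x-b_L=x+\sigma_1-\sigma_0$ back into $2P_{L+1}=\Omega_0+(x-b_L)\Omega_1$ produces the claimed form of $P_{L+1}$. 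I expect the bookkeeping of these root-sum identities to be the only delicate part; everything else is a direct reading of \re{lem_zer_P}.
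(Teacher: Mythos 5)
Your proof is correct and follows the paper's strategy almost exactly: both arguments evaluate \re{lem_zer_P} at the middle indices, identify the resulting monic combinations with $\Omega_0$ and $\Omega_1$ by counting their zeros, and fix the remaining constants from leading coefficients and the recurrence \re{3_term_P}. The one genuine divergence is your determination of $b_L$ in the even case: the paper obtains it by comparing the coefficients of $x^L$ in $2P_{L+1}=\Omega_0+(x-b_L)\Omega_1$ (implicitly using that $P_L=\Omega_1$ forces $b_0+\cdots+b_{L-1}=\sigma_1$, so the subleading coefficient of $P_{L+1}$ is $-(\sigma_1+b_L)$ while the right-hand side gives $-(\sigma_0+\sigma_1+b_L)/2\cdot 2$), whereas you compute $b_L=\langle x\Omega_1,\Omega_1\rangle/\langle\Omega_1,\Omega_1\rangle$ via the root-sum (divided-difference) identities for $\Omega_0$. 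Your route is longer but self-contained, and it uses precisely the finite-difference machinery the paper introduces immediately after this lemma; the paper's coefficient comparison is quicker once the subleading-coefficient identity is noticed. Incidentally, your sign bookkeeping is the correct one: in the paper's own write-up, \re{PP_OmOm} and \re{P_om_5} contain sign slips (it is $P_{L+1}+\nu_LP_L$ that vanishes at the even points, not $P_{L+1}-\nu_LP_L$, and \re{P_om_5} should read $2P_{L+1}=\Omega_0+(x-b_L)\Omega_1$); with the signs as printed one would get $2\nu_L=\sigma_0-\sigma_1<0$, contradicting $\nu_L>0$. The final formulas of the lemma agree with your derivation.
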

\begin{proof}
Consider first the case with $N$ odd. From formula \re{lem_zer_P} we have that the monic polynomial $P_{L+1}(x) -  \nu_L P_L(x)$ of degree $L+1$ has zeros at points $x_0, x_2, \dots, x_{N-1}$. Similarly, the monic polynomial $P_{L+1}(x) +  \nu_L P_L(x)$ of degree $L+1$ has zeros at points $x_1, x_3, \dots, x_N$. This means that
\be
P_{L+1}(x) -  \nu_L P_L(x) = \Omega_0(x), \quad P_{L+1}(x) +  \nu_L P_L(x) = \Omega_1(x). \lab{PP_OmOm} \ee
From \re{PP_OmOm} we obtain
\be
P_{L+1}(x) =\frac{\Omega_0(x) + \Omega_1(x) }{2}, \quad P_L(x)=\frac{\Omega_1(x) - \Omega_0(x) }{2 \nu_L}. \lab{P_LL_1} \ee
From \re{nu_hh} we have $\nu_L= \sqrt{u_{L+1}}>0$. On the other hand we have
$$
\Omega_1(x) - \Omega_0(x) = (\sigma_0 - \sigma_1) x^L + O(x^{L-1})
$$
and hence from the second formula \re{P_LL_1} it follows that $2 \nu_L=\sigma_1-\sigma_0$. This gives the formula \re{u_L+1_odd} and the second formula \re{P_L_L+1_odd}.

Let $N=2L$ be even. Putting $n=L$ in \re{lem_zer_P}, we immediately obtain the second formula \re{P_L_even}. Put then $n=L-1$. Formula \re{lem_zer_P} gives the relations
\be
P_{L+1}(x) - u_L P_{L-1}(x) = \Omega_0(x), \quad P_{L+1}(x) + u_L P_{L-1}(x) = \Omega_1(x) (x-\beta), \lab{PP_OmOm_2} \ee
where $\beta$ is some parameter (an additional factor $x-\beta$ is needed in the second formula \re{PP_OmOm_2} because the degree of the polynomial $\Omega_1(x)$ is $L$). From the recurrence relation \re{3_term_P} for the polynomials $P_n(x)$ we can rewrite the second formula \re{PP_OmOm_2} as
$$
(x-b_L) P_L(x) =(x-b_L) \Omega_1(x) = (x-\beta) \Omega_1(x) 
$$
whence $\beta = b_L$. From the same recurrence relation \re{3_term_P}, the first formula \re{PP_OmOm_2} becomes
\be
2 P_{L+1}(x) = \Omega_0(x) -(x-b_L) \Omega_1(x). 
\lab{P_om_5} \ee
This gives formula \re{P_L_even}. The expression \re{b_L_even} for $b_L$ follows easily from \re{P_om_5} by comparing the coefficients in front of $x^L$.
\end{proof}

The next property of the persymmetric polynomials seems to be new also. This property specifies the orthogonality relations for these polynomials.

Let $P_n(x), \: n=0,1,\dots, N$ be monic persymmetric polynomials. They are orthogonal with respect to the weights $w_s$ given by formula \re{w_s_per}. It turns out that the polynomials $P_n(x)$ for $n<N/2$ have orthogonality relations that are more simple.
In order to see this we first need to 
introduce finite differences and a few of their properties.
Let  $x_0, x_1, \ldots$ be a prescribed set of distinct points. 
Consider the characteristic polynomial $P_{N+1}(x) = (x-x_0)(x-x_1)...(x-x_N)$ of the first $N +1$ points. 
The finite difference $f[x_0,x_1,\ldots,x_N]$ of $N$-th order of the function $f(x)$ may be defined as \cite{NSU}:
\begin{eqnarray}
 f[x_0,x_1,\ldots,x_N] = \sum_{s=0}^{N}\dfrac{f(x_s)}{P_{N+1}'(x_s)}.
\end{eqnarray}
In the simplest case of the first order finite difference
\begin{eqnarray}
 f[x_0,x_1] = \dfrac{f(x_0)-f(x_1)}{x_0-x_1}.
\end{eqnarray}
Among the many useful properties of finite differences we need the following: for any polynomial $\pi(x)$ of degree $< N$, the divided difference of $N$-th order is zero \cite{NSU}. This property is the finite difference analogue of the well known property that the $N$-th order derivative of a polynomial of degree $< N$ is zero.

We already know that the persymmetric polynomials are orthogonal with respect to the weight function
\begin{eqnarray*}
 w_s = (-1)^{N+s}\dfrac{h_N}{P_{N+1}(x_s)}, \quad s=0,1,\ldots,N.
\end{eqnarray*}
In particular, we can introduce the corresponding moments
\begin{eqnarray}
 c_n = \sum_{s=0}^{N}x_s^n w_s = \sum_{s=0}^{N}(-1)^{N+s}\dfrac{x_s^n}{P_{N+1}'(x_s)}, \quad n= 0,1,\ldots,N.
 \label{moment:cn}
\end{eqnarray}
Using these moments we can obtain the following expression for the orthogonal polynomials \cite{Chi}
\begin{eqnarray*}
 P_n(x)=
\frac{1}{\Delta_n}
\left(\begin{array}{cccc}
 c_0&c_1 &\cdots &c_{n-1} \\
 c_1&c_2 &\cdots &c_{n} \\
 \cdots&\cdots &\cdots &\cdots \\
 c_{n-1}&c_{n} &\cdots &c_{2n-2} \\
 1&x &\cdots &x^n \\
      \end{array}\right) ,
\end{eqnarray*}
where $\Delta$ is the Hankel determinant
\begin{eqnarray*}
\left(\begin{array}{cccc}
 c_0&c_1 &\cdots &c_{n-1} \\
 c_1&c_2 &\cdots &c_{n} \\
 \cdots&\cdots &\cdots &\cdots \\
 c_{n-1}&c_{n} &\cdots &c_{2n-2} \\
      \end{array}\right) .
\end{eqnarray*}
It is seen that in order to obtain the expression of the polynomial $P_n(x)$, we need the moments $c_0, c_1, \ldots, c_{2n-1}.$ Note that $c_0 = 1$ given the normalization condition (\ref{norm_w}).

Assume that $N = 2L + 1$ is even. 
From (\ref{moment:cn}) we then have on the one hand
\begin{eqnarray}
 c_n= \sum_{s=0}^{L}\dfrac{x_{2s+1}^n}{P_{N+1}'(x_{2s+1})}
    - \sum_{s=0}^{L}\dfrac{x_{2s}^n}{P_{N+1}'(x_{2s})}.
\label{eq326}
\end{eqnarray}
On the other hand 
using the property of finite differences that we singled out, we have
\begin{eqnarray}
 \sum_{s=0}^{N}\dfrac{x_{s}^n}{P_{N+1}'(x_{s})}
 =
 \sum_{s=0}^{L}\dfrac{x_{2s}^n}{P_{N+1}'(x_{2s})}
    + \sum_{s=0}^{L}\dfrac{x_{2s+1}^n}{P_{N+1}'(x_{2s+1})}
 =0 , \quad n<N.
\label{eq327}
\end{eqnarray}
Combining (\ref{eq327}) and (\ref{eq326}) and (\ref{moment:cn}) we obtain that for $n < N$
\begin{eqnarray}
 c_n= 2 \sum_{s=0}^{L}x_{2s+1}^n w_{2s+1}
    = 2 \sum_{s=0}^{L}x_{2s}^n w_{2s}.
\label{eq326a}
\end{eqnarray}
This means that the moments $c_1,c_2,...,c_{N-1}$ can be calculated using only half of the spectral points: 
that is using either the odd or the even spectral points.

We thus arrive at the following result.
\begin{lemma} \label{lem34}
 Let $N = 2L+1$ be odd. The polynomials $P_n(x)$ for $n = 0, 1, \ldots, L$ are orthogonal either with respect to the weights $w_0, w_2, \ldots, w_{2L}$ on the grid $x_0, x_2, \ldots, x_{2L}$ or with respect to the weights $w_1, w_3, \ldots, w_{2L+1}$ on the grid $x_1, x_3, \ldots, x_{2L+1}$. 
If $N = 2L$ is even, then the polynomials $P_n(x)$ for $n = 0, 1, \ldots, L - 1$ are orthogonal with respect to the weights $w_1, w_3, \ldots, w_{2L-1}$ on the grid $x_1, x_3, \ldots, x_{2L-1}$.
\end{lemma}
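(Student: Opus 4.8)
The plan is to exploit the fact that the monic orthogonal polynomial $P_n(x)$ is completely determined by the lowest moments $c_0,c_1,\ldots,c_{2n-1}$, combined with the identity \re{eq326a}, which shows that these low moments are reproduced, up to an overall positive constant, by the measure restricted to either sublattice. Recall that $P_n$ is the unique monic polynomial of degree $n$ satisfying $\sum_{s=0}^N w_s P_n(x_s)x_s^k=0$ for $k=0,1,\ldots,n-1$; since $P_n(x)x^k$ has degree $n+k\le 2n-1$, each such condition is a linear combination of the moments $c_0,\ldots,c_{2n-1}$ only. Hence, if a second positive measure shares these moments, its monic orthogonal polynomial of degree $n$ must coincide with $P_n$. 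The whole proof is the realization of this principle on each half-grid.

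First I would treat $N=2L+1$ odd. Introduce the restricted bilinear forms $\langle f,g\rangle_{\mathrm{odd}}=\sum_{s=0}^{L}w_{2s+1}f(x_{2s+1})g(x_{2s+1})$ and $\langle f,g\rangle_{\mathrm{even}}=\sum_{s=0}^{L}w_{2s}f(x_{2s})g(x_{2s})$. By \re{eq326a} their moments satisfy $\langle x^m,1\rangle_{\mathrm{odd}}=\langle x^m,1\rangle_{\mathrm{even}}=\tfrac12 c_m$ for every $m<N=2L+1$. Now fix $n\le L$ and $k\le n-1$; then $P_n(x)x^k$ has degree $n+k\le 2L-1<N$, so it is a combination of moments $c_m$ with $m<N$, and therefore $\langle P_n,x^k\rangle_{\mathrm{odd}}=\tfrac12\langle P_n,x^k\rangle=0$ by the full orthogonality \re{ort_h}, and likewise on the even sublattice. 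Since each sublattice carries $L+1$ distinct nodes with positive weights, it supports a unique monic orthogonal polynomial of every degree up to $L$, and by the determinacy noted above these are exactly $P_0,\ldots,P_L$. This proves both assertions for odd $N$.

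For $N=2L$ even I would first establish the analogue of \re{eq326a}. Using $(-1)^{N+s}=(-1)^s$ and splitting the moment sum \re{moment:cn} by parity of the index gives $c_n=\sum_{s=0}^{L}x_{2s}^n/P_{N+1}'(x_{2s})-\sum_{s=0}^{L-1}x_{2s+1}^n/P_{N+1}'(x_{2s+1})$, while the vanishing of the $N$-th divided difference of $x^n$ for $n<N$ forces the even- and odd-indexed partial sums to be opposite. Combining the two identities yields $c_n=2\sum_{s=0}^{L-1}w_{2s+1}x_{2s+1}^n$ for all $n<N=2L$, so the odd-sublattice moments again reproduce $c_m$ up to a constant for $m<N$. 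The odd grid $x_1,x_3,\ldots,x_{2L-1}$ now carries only $L$ nodes and thus supports orthogonal polynomials up to degree $L-1$; repeating the previous argument with $n\le L-1$ and $k\le n-1$, so that $n+k\le 2L-3<N$, gives $\langle P_n,x^k\rangle_{\mathrm{odd}}=0$ and identifies $P_0,\ldots,P_{L-1}$ as the orthogonal polynomials on this grid.

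The delicate points, which I would check carefully, lie entirely in the bookkeeping. First, one must verify that every moment entering the orthogonality condition for $P_n$ has index strictly below $N$; this is precisely the source of the degree ceilings $L$ (for odd $N$) and $L-1$ (for the odd sublattice when $N$ is even), because \re{eq326a} and its even-$N$ analogue break down at $m=N$, where the two sublattice contributions cease to be equal and become opposite instead. Second, one should note that each restricted measure is genuinely positive, since the weights $w_s$ are strictly positive by \re{w_s_per} together with the sign/interlacing property \re{alt_P_N}; this guarantees that the restricted orthogonal polynomials exist and are unique, so the statement is not vacuous. Beyond these checks no computation is required, as the coincidence of the polynomials follows purely from the matching of the relevant moments.
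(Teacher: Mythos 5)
Your proof is correct and follows essentially the same route as the paper: the key step in both is the divided-difference identity showing that the sublattice moments equal half of the full moments $c_m$ for $m<N$, from which the transfer of orthogonality follows by degree counting. You also supply the details the paper leaves implicit (the explicit verification that moment matching forces the restricted orthogonal polynomials to coincide with $P_n$, and the even-$N$ analogue of \re{eq326a}), but these are elaborations of the same argument rather than a different one.
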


With the help of this lemma, we can propose a most efficient
algorithm for solving inverse spectral problem for persymmetric Jacobi matrices with prescribed spectra.

Recall that a persymmetric matrix (equivalently, the recurrence coefficients $b_n$,$u_n$) can be reconstructed uniquely from the spectrum $x_0, x_1, \ldots, x_N$ \cite{BG}, \cite{Gladwell}. 
As already mentioned,
one approach is to use the Gram-Schmidt orthogonalization procedure with respect to the prescribed weights (\ref{n_norm_w}), another one is to call upon the 
Euclidean algorithm which involves the division
of the polynomial $P_{n+1}(x)$ by the polynomial $P_{n}(x)$ to get the next polynomial $P_{n-1}(x)$. 
In the last case we start from the given polynomial $P_{N+1}(x) = (x-x_0)(x-x_1)\cdots(x-x_N)$ and the polynomial $P_{N}(x)$ which can be reconstructed by the Lagrange interpolation formula using its characteristic property $\chi_N (x_s) = (-1)^{N+s}$. 
We can then iteratively obtain all the remaining polynomials $P_n(x), \: n=N-1,N-2,\dots, 1$ and the corresponding recurrence coefficients $b_n, u_n$. 
This algorithm thus allows to reconstruct 
efficiently persymmetric Jacobi matrices from prescribed spectral data $x_0, x_1,\dots, x_N$. 
It can be applied in particular to the problem of finding how the couplings of spin chains should be pre-engineered so as to ensure the occurrence of perfect state transfer along the chain \cite{VZ_PST}.

One can however drastically improve the efficiency of the inverse problem algorithm by making use of the special properties of the persymmetric polynomials that we have identified.

Indeed, consider, e.g. the case where $N = 2L+1$ is odd. We already know explicitly the recurrence coefficient $u_{L+1}$ and the polynomials $P_{L+1}(x)$ and $P_{L}(x)$ from formulas (\ref{P_L_L+1_odd}). 
From this point on, using the Euclidean algorithm we can 
reconstruct the polynomials
$P_{L-1}(x), P_{L-2}(x), \ldots, P_1(x)$ and the corresponding recurrence coefficients 
$b_L,b_{L-1},\ldots, b_0, u_L, u_{L-1},\ldots, u_1$.

The efficiency of this method 
lies in the fact that start roughly from the the middle of the polynomial sequence $P_N(x), P_{N-1}(x), \dots, P_1(x), P_0(x)$ 
instead of starting from its end (i.e. from the polynomials $P_{N+1}(x), P_N(x)$). 
Moreover, the polynomials $P_{L+1}(x)$ and $P_L(x)$ are given by the simple explicit  expressions \re{P_L_L+1_odd} 
while otherwise, as indicated before, we need first to calculate the polynomial $P_N(x)$ using the Lagrange interpolation method. This 
indicates that the proposed new 
algorithm will be quicker than the previous one.

Another efficient algorithm 
can be obtained if one exploits the orthogonality of the persymmetric polynomials on the even sublattice $x_0, x_2, \ldots, x_{2L}$ 
described in Lemma \ref{lem34}.
We can then apply the standard Gram-Schmidt 
orthogonalization procedure to reconstruct the polynomials $P_1(x), P_2(x), \dots, P_L(x)$, i.e. only 
half of the set of all the polynomials $P_1, P_2, \dots, P_N(x)$. 
However, for the purpose of the reconstruction of the persymmetric Jacobi matrix this information is enough. Moreover, the Gram-Schmidt orthogonalization algorithm can be stopped at the level of the polynomial $P_{L-1}(x)$ because 
the next two polynomials $P_L(x)$ and $P_{L+1}(x)$ are already known explicitly  \re{P_L_L+1_odd}. 

We thus see that these two new algorithms are poised to be
much more efficient and faster than the standard ones.

For the case of even $N$ one can design similar algorithms.

\section{Isospectral deformation of persymmetric matrices}
\setcounter{equation}{0}
Let $V$ be the $(N+1) \times (N+1)$ matrix defined 
in terms of the parameter $\theta$,  $0 < \theta <  2 \pi$.
For $N$ odd, take
\begin{eqnarray}
 V = \left(
\begin{array}{cccccc}
 \sin \theta& & & & & \cos\theta\\
&\ddots & & &\udots&\\
& &\sin \theta & \cos\theta&\\
& &\cos \theta & -\sin\theta&\\
&\udots & & &\ddots&\\
 \cos \theta& & & & & -\sin\theta\\
\end{array}
\right)
\end{eqnarray}
and for $N$ even, write
\begin{eqnarray}
 V = \left(
\begin{array}{ccccccc}
 \sin \theta& && & && \cos\theta\\
&\ddots && &&\udots&\\
& &\sin \theta &0 & \cos\theta&&\\
& & 0 & 1 & 0 &&\\
& &\cos \theta &0 & -\sin\theta&&\\
&\udots & &&& \ddots&\\
 \cos \theta& & & & && -\sin\theta\\
\end{array}
\right).
\end{eqnarray}
Clearly the matrix matrix $V$ is symmetric $V$ = $V^{T}$ and is moreover an involution, i.e.
\begin{eqnarray}
 V^2 =I,
\end{eqnarray}
where $I$ is the identity matrix.

Consider the following isospectral transformation of the persymmetric Jacobi
matrix (\ref{per_K})
\begin{eqnarray}
 \tilde J= V J V.
\end{eqnarray}
Obviously, this transformation is isospectral, i.e. the matrix $\tilde J$ has the same spectrum $\{\lambda_0, \lambda_1, \ldots, \lambda_N\}$ as the matrix $J$. Moreover, it is easily verified that the matrix $\tilde J$ 
is no longer persymmetric but remains
symmetric tridiagonal.
Furthermore the deviation of the matrix $\tilde J$ from the matrix $J$ is minimal.
Indeed, the only entries of $\tilde J$ that differ from those of $J$ are
\begin{eqnarray}
 \tilde a_{\frac{N+1}{2}} = a_{\frac{N+1}{2}} \cos 2\theta, \quad
 \tilde b_{\frac{N \mp 1}{2}} = b_{\frac{N-1}{2}}  \pm a_{\frac{N+1}{2}} \sin 2\theta,
\end{eqnarray}
for $N$ odd and
\begin{eqnarray}
 \tilde a_{\frac{N}{2}} = a_{\frac{N}{2}} (\cos \theta + \sin\theta), \quad
 \tilde a_{\frac{N}{2}+1} = a_{\frac{N}{2}} (\cos \theta - \sin \theta)
\end{eqnarray}
for $N$ even.

Because the matrix $\tilde J$ is tridiagonal, it has eigenvectors $\tilde X_s$ such that
\begin{eqnarray}
 \tilde X_s = \sum_{n=0}^{N}\sqrt{\tilde w_s}\tilde \chi_n(x_s)\, e_n,
\label{eigenvec:tilde Xs}
\end{eqnarray}
where $\tilde \chi_n(x)$ are orthonormal polynomials satisfying the recurrence relation
\begin{eqnarray}
 \tilde a_{n+1} \tilde \chi_{n+1}(x) + \tilde b_n \tilde \chi_n(x) + \tilde a_n \tilde \chi_{n-1}(x) = x \tilde \chi_n(x)
\end{eqnarray}
with the standard initial conditions $\tilde \chi_0=1, \tilde \chi_{-1}=0$. These polynomials are orthogonal with respect to the weights $\tilde w_s$:
\begin{eqnarray}
 \sum_{s=0}^n \tilde  w_s \tilde \chi_n (x_s) \tilde \chi_m(x_s) = \delta_{nm}.
\end{eqnarray}
The spectral points $x_s$ remain the same 
as those of the polynomials
$\chi_n(x)$ because the
Jacobi matrix $\tilde J$ has the same spectrum as the Jacobi matrix $J$.

Let $N$ be odd. Then we have
\begin{pr}
 For $N$ odd, the weights $\tilde w_s$ are
\begin{eqnarray}
 \tilde w_{2s} = w_{2s} (1-\sin 2 \theta),\,\, \tilde w_{2s+1} = w_{2s+1} (1+\sin 2\theta), \quad s = 0,1,\ldots,\frac{N-1}{2}.
\label{prop:tilde w_{2s}}
\end{eqnarray}
\end{pr}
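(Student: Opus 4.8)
The plan is to realize the deformed eigenvectors as the images of the undeformed ones under $V$, and then to recover the weights from the first ($e_0$) component of each eigenvector.

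First I would exploit that $\tilde J = VJV$ with $V^2 = I$. If $J X_s = x_s X_s$, then $\tilde J (V X_s) = VJV\cdot V X_s = V J X_s = x_s (V X_s)$, so $V X_s$ is an eigenvector of $\tilde J$ with eigenvalue $x_s$. Since $V$ is a symmetric involution it is orthogonal and preserves norms, so $V X_s$ is again a unit vector; by the nondegeneracy \re{ndeg_x} of the spectrum of $\tilde J$ (which coincides with that of $J$), the normalized eigenvector $\tilde X_s$ of \re{eigenvec:tilde Xs} must equal $\pm V X_s$.

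Next I would compare the $e_0$ components of the two expressions for $\tilde X_s$. On the one hand, from \re{eigenvec:tilde Xs} together with $\tilde\chi_0 = 1$, the coefficient of $e_0$ in $\tilde X_s$ is $\sqrt{\tilde w_s}$. On the other hand, I would read off the coefficient of $e_0$ in $V X_s$ directly from the structure of $V$ for $N$ odd: only the corner and anti-corner entries feed into $e_0$, namely $V e_0 = \sin\theta\, e_0 + \cos\theta\, e_N$ and $V e_N = \cos\theta\, e_0 - \sin\theta\, e_N$. Using the expansion \re{X_e} of $X_s$ and $\chi_0 = 1$, the $e_0$ coefficient of $V X_s$ therefore equals $\sqrt{w_s}\,(\sin\theta + \chi_N(x_s)\cos\theta)$.

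Then I would insert the characteristic value $\chi_N(x_s) = (-1)^{N+s}$ from \re{chi_ve} and \re{ve_s}, which for $N$ odd gives $\chi_N(x_{2s}) = -1$ and $\chi_N(x_{2s+1}) = +1$. Equating the magnitudes of the two $e_0$ components (the sign ambiguity $\pm$ is harmless, since $\sqrt{\tilde w_s}\ge 0$ is recovered from the squared modulus of a single component) and squaring yields $\tilde w_{2s} = w_{2s}(\sin\theta - \cos\theta)^2 = w_{2s}(1 - \sin 2\theta)$ and $\tilde w_{2s+1} = w_{2s+1}(\sin\theta + \cos\theta)^2 = w_{2s+1}(1 + \sin 2\theta)$, which is the claim. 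The computation is short; the only point requiring attention is the identification $\tilde X_s = \pm V X_s$ and the attendant sign, while the essential input is the persymmetry, entering solely through $\chi_N(x_s) = (-1)^{N+s}$, which is exactly what separates even from odd spectral points and produces the two factors $1 \mp \sin 2\theta$.
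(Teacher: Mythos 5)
Your proposal is correct and follows essentially the same route as the paper: identify $VX_s$ as the eigenvector of $\tilde J=VJV$, compare its $e_0$ component with $\sqrt{\tilde w_s}$ from \re{eigenvec:tilde Xs}, insert $\chi_N(x_s)=(-1)^{N+s}$, and square. Your explicit handling of the $\pm$ sign ambiguity in $\tilde X_s=\pm VX_s$ is a small point of extra care that the paper glosses over, but it changes nothing in substance.
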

\begin{proof}
 We have by definition
\begin{eqnarray}
 \tilde X_s = V X_s = \sum_{n=0}^{N}\sqrt{w_s} \chi_n(x_s) V e_n
  = \sum_{i,n} \sqrt{w_s} \chi_n(x_s) V_{in} e_i
\label{tilde X_s=VXs}
\end{eqnarray}
where $V_{in}$ are the entries of the matrix $V$. Comparing formulas (\ref{eigenvec:tilde Xs}) and (\ref{tilde X_s=VXs}) we arrive at the relations
\begin{eqnarray}
 \sqrt{\tilde w_s} \tilde \chi_n (x_s) = \sum_i \sqrt{w_s} V_{ni} \chi_i (x_s), \quad n,i= 0,1,\ldots,N.
\label{rel:tilde chi and chi}
\end{eqnarray}
Put $n=0$ in (\ref{rel:tilde chi and chi}):
\begin{eqnarray}
 \sqrt{\tilde w_s} = \sqrt{w_s}(\sin \theta\, \chi_0(x_s) + \cos \theta \,\chi_N(x_s))
  = \sqrt{w_s} (\sin \theta - (-1)^s\cos \theta).
\label{sqrt tilde chi}
\end{eqnarray}
Taking the square of both sides of (\ref{sqrt tilde chi}) we arrive at (\ref{prop:tilde w_{2s}}).
\end{proof}

\begin{pr}
 For $N$ odd the polynomials $\tilde \chi_n(x)$ are expressed in terms of the polynomials $\chi_n(x)$ as follows:
\begin{eqnarray}
 \tilde \chi_{n}(x) = 
\left\{
\begin{array}{cl}
 \chi_n(x),& \text{if }\,\, n \le \frac{N-1}{2},\\[1mm]
 \frac{1}{\cos 2\theta} \chi_n(x)
  - \frac{\sin 2\theta}{\cos 2\theta} \chi_{N-n}(x), & 
 \text{if }\,\, n > \frac{N-1}{2}.\\
\end{array}
\right.
\label{prop:tilde chi_{n}}
\end{eqnarray}
\end{pr}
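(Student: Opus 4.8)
The plan is to extract everything from the single linear relation (\ref{rel:tilde chi and chi}) already established in the proof of the preceding proposition, namely $\sqrt{\tilde w_s}\,\tilde\chi_n(x_s)=\sqrt{w_s}\sum_i V_{ni}\chi_i(x_s)$, combined with the sparsity of $V$ for $N$ odd and with Lemma \ref{lem_31}. For $N$ odd, pairing the index $n$ with $N-n$ shows that each row of $V$ has exactly two nonzero entries: a diagonal entry $V_{nn}=\sin\theta$ when $n\le (N-1)/2$ and $V_{nn}=-\sin\theta$ when $n\ge(N+1)/2$, together with the anti-diagonal entry $V_{n,N-n}=\cos\theta$. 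Hence the sum collapses to two terms, $\sum_i V_{ni}\chi_i(x_s)=\pm\sin\theta\,\chi_n(x_s)+\cos\theta\,\chi_{N-n}(x_s)$, the sign depending on whether $n$ lies in the lower or the upper half of the index range. I also record that, by (\ref{sqrt tilde chi}) and $(-1)^{N+s}=-(-1)^s$ for $N$ odd, the weight ratio is $\sqrt{\tilde w_s/w_s}=\sin\theta-(-1)^s\cos\theta=\sin\theta+(-1)^{N+s}\cos\theta$.

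First I would treat $n\le (N-1)/2$. Substituting $\chi_{N-n}(x_s)=(-1)^{N+s}\chi_n(x_s)$ from Lemma \ref{lem_31} gives $\sum_i V_{ni}\chi_i(x_s)=\bigl(\sin\theta+(-1)^{N+s}\cos\theta\bigr)\chi_n(x_s)$, and this factor is precisely $\sqrt{\tilde w_s/w_s}$. Cancelling the common factor yields $\tilde\chi_n(x_s)=\chi_n(x_s)$ at every spectral point where it is nonzero. Since $\tilde\chi_n$ and $\chi_n$ are both polynomials of degree $n\le(N-1)/2<N+1$, agreement at the full grid of $N+1$ points forces the polynomial identity $\tilde\chi_n\equiv\chi_n$, which is the first branch of the claim.

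For $n\ge(N+1)/2$ the diagonal sign flips, so $\sqrt{\tilde w_s/w_s}\,\tilde\chi_n(x_s)=-\sin\theta\,\chi_n(x_s)+\cos\theta\,\chi_{N-n}(x_s)$. Dividing by $\sqrt{\tilde w_s/w_s}=\sin\theta+(-1)^{N+s}\cos\theta$ and rationalizing — multiplying numerator and denominator by $\bigl((-1)^{N+s}\cos\theta-\sin\theta\bigr)$, which turns the denominator into $\cos 2\theta$ — reduces the coefficient of $\chi_n(x_s)$ to $\bigl(1-(-1)^{N+s}\sin2\theta\bigr)/\cos2\theta$. Using $\chi_{N-n}(x_s)=(-1)^{N+s}\chi_n(x_s)$ once more, I would recognize this expression as $\tfrac{1}{\cos2\theta}\chi_n(x_s)-\tfrac{\sin2\theta}{\cos2\theta}\chi_{N-n}(x_s)$, establishing the identity at every $x_s$; since both sides are polynomials of degree $n\le N$, agreement at $N+1$ points again promotes it to a genuine polynomial identity. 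The one delicate point is the bookkeeping around $\cos 2\theta=0$: there the weight factor $\sin\theta+(-1)^{N+s}\cos\theta$ vanishes (equivalently $\tilde w_s=0$), but these are exactly the values at which the second branch of the stated formula is undefined, so they are excluded by hypothesis and cause no difficulty.
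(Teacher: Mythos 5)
Your proposal is correct and follows essentially the same route as the paper: both start from the relation $\sqrt{\tilde w_s}\,\tilde\chi_n(x_s)=\sqrt{w_s}\sum_i V_{ni}\chi_i(x_s)$, use the two-term sparsity of $V$ together with Lemma \ref{lem_31} to reduce everything to an identity at the $N+1$ spectral points, and then promote it to a polynomial identity by degree counting. The only (cosmetic) difference is in the second branch, where you solve for $\tilde\chi_n(x_s)$ directly by rationalizing the quotient, while the paper verifies that the candidate $\pi_n(x)=\tfrac{1}{\cos 2\theta}\chi_n(x)-\tfrac{\sin 2\theta}{\cos 2\theta}\chi_{N-n}(x)$ satisfies the same defining relation.
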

\begin{proof}
From (\ref{rel:tilde chi and chi}) and (\ref{sqrt tilde chi}), we have the relations 
\begin{eqnarray}
 (\sin\theta -(-1)^s \cos\theta) \tilde \chi_n(x_s)
  = \sum_{i} V_{ni}\chi_i(x_s).
\label{prop odd N:proof:rel1}
\end{eqnarray}
When $n \le \frac{N-1}{2}$ relation (\ref{prop odd N:proof:rel1}) becomes
\begin{eqnarray}
 (\sin\theta -(-1)^s \cos\theta) \tilde \chi_n(x_s)
  = \sin\theta\chi_n(x_s) + \cos\theta \chi_{N-n}(x_s).
\label{prop odd N:proof:rel2}
\end{eqnarray}
Taking into account relation (\ref{lem_zer_chi}), we obtain from (\ref{prop odd N:proof:rel2})
\begin{eqnarray}
 \tilde \chi_n(x_s) = \chi_n(x_s), \quad s=0,1,\ldots,N, \quad n=0,1, \ldots,\frac{N-1}{2}.
\label{prop odd N:proof:rel3}
\end{eqnarray}
From (\ref{prop odd N:proof:rel3}) it follows trivially that
\begin{eqnarray}
 \tilde \chi_n(x_s) = \chi_n(x_s), \quad n= 0,1, \ldots, \frac{N-1}{2}.
\end{eqnarray}
When $n > \frac{N-1}{2}$ we have similarly
\begin{eqnarray}
  (\sin\theta -(-1)^s \cos\theta) \tilde \chi_n(x_s)
 = - (\sin\theta +(-1)^s \cos\theta) \chi_n(x_s).
\end{eqnarray}
It is easily verified that the polynomials
\begin{eqnarray}
 \pi_n(x) = \frac{1}{\cos 2\theta} \chi_n(x) 
 -\frac{\sin2\theta}{\cos 2\theta} \chi_{N-n}(x)
\end{eqnarray}
satisfy the same relations
\begin{eqnarray}
   (\sin\theta -(-1)^s \cos\theta) \pi_n(x_s)
   = - (\sin\theta +(-1)^s \cos\theta) \chi_n(x_s)
\end{eqnarray}
for all $s=0,1,\ldots,N$. 
Hence the polynomials $\pi_n(x)$ and $\tilde \chi_n(x)$ should coincide identically for $n=\frac{N+1}{2}, \frac{N+3}{2}, \ldots, N$ and we arrive formula (\ref{prop:tilde chi_{n}}).
\end{proof}
The isospectral perturbation of the persymmetric matrices described in this section has  an important application in 
the contruction of spin chains that  exhibit the fractional revival phenomenon, an effect that can be used for quantum information purposes to communicate data and generate entanglement \cite{GVZ_1506}, \cite{GVZ_1507}.
Moreover, the same perturbation of the persymmetric Jacobi matrix appears in the construction of a new finite-dimensional truncation of the Wilson polynomials leading to a new class of orthogonal polynomials, namely the para-Racah polynomials \cite{LVZ}.

\section{Conclusion}
In this paper we 
have described the properties of persymmetric matrices and of the corresponding orthogonal polynomials. 
This has allowed to offer a fast and efficient algorithm for reconstructing the persymmetric matrix from spectral data. 
We have shown moreover that there exists a simple 
isospectral deformation 
of persymmetric matrices that leads to explicit modified polynomials of interest.
We wish to stress once more that the results obtained in this paper are of relevance in studies of spin chains aimed at quantum information issues.

The properties of the persymmetric matrices reported here have further applications, e.g. in the theory of classical orthogonal polynomials on discrete lattices and in the theory of the Pad\'e interpolation. 
These questions will be considered elsewhere.

\bigskip\bigskip
{\Large\bf Acknowledgments}
\bigskip
\\
The authors wish to express their appreciation for the hospitality that
 the Centre de Recherches Math\'ematiques Universite de Montreal and the Graduate School of Informatics, Kyoto University have extended to some of them in the course of this work.

The research of LV is supported in part by a grant from the Natural Sciences and Engineering Research Council (NSERC) of Canada and 
the one of ST is supported by JSPS KAKENHI Grant Numbers 25400110.

\bb{99}

\bi{Albanese} C.~Albanese, M.~Christandl, N.~Datta and A.~Ekert, {\it Mirror inversion of quantum states in linear registers}, Phys. Rev. Lett. {\bf 93} (2004), 230502.

\bi{Atkinson} F.~V.~Atkinson, {\it Discrete and Continuous Boundary problems}, Academic Press, NY, London, 1964.

\bi{BG} C.~de~Boor and G.~H.~Golub {\it The numerically stable reconstruction of a Jacobi matrix from spectral data}, Lin. Alg. Appl. {\bf 21} (1978), 245--260.

\bi{BS} C.~de~Boor and E.~Saff, {\it Finite sequences of orthogonal polynomials connected by a Jacobi matrix}, Lin. Alg. Appl. {\bf 75} (1986), 43--55

\bi{Bor} A.~Borodin, {\it Duality of Orthogonal Polynomials on a
Finite Set}, J. Stat. Phys. {\bf 109}, (2002), 1109--1120.

\bi{Chi} T.~Chihara, {\it An Introduction to Orthogonal
Polynomials}, Gordon and Breach, NY, 1978.

\bi{Dai} L.~Dai, Y.~P.~Feng and L.~C.~Kwek, {\it Engineering quantum cloning through maximal entanglement between boundary qubits in an open spin chain}, J. Phys. A: Math. Theor., {\bf 43} (2010), 035302.

\bi{GVZ_1506} V.~X.~Genest, L.~Vinet and A.~Zhedanov,
{\it Exact fractional revival in spin chains},
arXiv:1506.08434, 2015.

\bi{GVZ_1507} V.~X.~Genest, L.~Vinet and A.~Zhedanov,
{\it Quantum spin chains with fractional revival}, 
arXiv:1507.05919, 2015.

\bi{Gladwell}  G.~M.~L.~Gladwell , {\it Inverse Problems in Vibration}, 400 pp., Martinus Noordhoff, 1986.

\bi{Kay} A.~Kay, {\it A Review of Perfect State Transfer and its
Application as a Constructive Tool}, Int. J. Quantum Inf. {\bf 8}
(2010), 641--676;  arXiv:0903.4274.

\bi{KLS} R.~Koekoek, P.~Lesky and R.~Swarttouw, {\it Hypergeometric Orthogonal Polynomials and Their Q-analogues}, Springer-Verlag, 2010.

\bi{LVZ} J.-M.~Lemay, L.~Vinet and  A.~Zhedanov {\it The para-Racah polynomials}, arXiv:1511.05215.

\bi{NSU} A.~F.~Nikiforov, S.~K.~Suslov and V.~B.~Uvarov, 
{\it Classical Orthogonal Polynomials of a Discrete Variable},
Springer, Berlin, 1991.

\bi{Sz} G.~Szeg\H{o}, 
{\it Orthogonal Polynomials}, fourth edition,  AMS, 1975.

\bi{VZ1} L.~Vinet and A.~Zhedanov, 
{\it A characterization of classical and semiclassical orthogonal polynomials from their dual polynomials},  
J. Comp. Appl. Math. {\bf 172} (2004), 41--48.

\bi{VZ_PST} L.~Vinet and A.~Zhedanov, 
{\it How to construct spin chains with perfect state transfer}, 
Phys. Rev. A {\bf 85} (2012), 012323.

\eb

\end{document}